\newcommand{\N}{\ensuremath{\mathbb{N}}}
\newcommand{\Q}{\ensuremath{\mathbb{Q}}}
\newcommand{\Z}{\ensuremath{\mathbb{Z}}}
\newcommand{\R}{\ensuremath{\mathbb{R}}}
\newcommand{\cN}{\ensuremath{\mathcal{N}}}
\newcommand{\cL}{\ensuremath{\mathcal{L}}}
\newcommand{\0}{\ensuremath{\mathbf{0}}}
\DeclareMathOperator{\conv}{conv}
\DeclareMathOperator{\cone}{cone}
\DeclareMathOperator{\ver}{vert}
\theoremstyle{plain}
    \newtheorem*{theo*}{Theorem}
    \newtheorem{theo}{Theorem}
    \newtheorem*{conjecture}{Conjecture}
    \newtheorem{cor}[theo]{Corollary}
    \newtheorem{lemma}[theo]{Lemma}
\theoremstyle{definition}
	\newtheorem{defi}[theo]{Definition}
    \newtheorem{example}[theo]{Example}
\title{        Convex-normal (pairs of) polytopes}
\author[    Christian Haase \& Jan Hofmann ]{Christian Haase}
\address{Institut f\"ur Mathematik \\ Freie Universit\"at Berlin \\ Germany}
\email{haase@math.fu-berlin.de}
\author[]{Jan Hofmann}
\address{Institut f\"ur Mathematik \\
  Goethe-Universität
  Frankfurt am Main \\ Germany}
\email{jhofmann@math.uni-frankfurt.de}
\begin{document}

\begin{abstract}
In $2012$ Gubeladze (Adv.\ Math.\ 2012) 
introduced the notion of $k$-convex-normal polytopes to show that
integral polytopes all of whose edges are longer than $4d(d+1)$ have
the integer decomposition property.
In the first part of this paper we show that for lattice polytopes
there is no difference between $k$- and $(k+1)$-convex-normality (for
$k\geq 3 $) and improve the bound to $2d(d+1)$. In the second part we
extend the definition to pairs of polytopes and show that for rational
polytopes $P$ and $Q$, where $\cN(P)$ is a refinement of $\cN(Q)$, if
every edge $e_P$ of $P$ is at least $d$ times as long as the
corresponding edge $e_Q$ of $Q$, then $(P+Q)\cap \Z^d = (P\cap \Z^d )
+ (Q \cap \Z^d)$.
\end{abstract}

\maketitle

\section{Introduction }
Polytopes which have the integer decompostion property (IDP) turn up
in many fields of mathematics such as integer programming,
algebraic geometry, where they correspond to projectively normal
embeddings of toric varieties, and in commutative algebra, where
polytopes with the IDP are called integrally closed.\\
So it is natural to ask which polytopes have the IDP. There has
been a lot of research concering this question in recent years. One
way to prove the IDP of a given polytope is to cover it with
simpler polytopes known to have the IDP. The first approach would be
to use the easiest IDP polytopes, namely unimodular simplicies, and
try to show that every polytope with the IDP can be triangulated into
unimodular simplices.
This does not work in general, already in dimension $3$
\cite{KS03}. Relaxing triangulations to coverings with unimodular
simplices, there is a famous $5$-dimensional polytope with the IDP
which does not have such a covering \cite{BG99}. One very nice
positive result is that given a lattice polytope $P$, if all edge
lenghts of $P$ (with respect to the lattice) have a common factor
$c\geq d-1$, then $P$ has the IDP \cite{EW91,LTZ93,BGT97}.\\
The following conjecture proposed during a workshop \cite{HHM07},
suggests that this is also true (maybe with a higher bound) in a more
generalized setting, where the edge-lengths can be independent:
\begin{conjecture}
  Simple lattice polytopes with long edges have the integer
  decompostion property, where long means some invariant, uniform in
  the dimension.
\end{conjecture}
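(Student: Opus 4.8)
The plan is to reduce the integer decomposition property to a single lattice-point question for a centrally symmetric auxiliary polytope and then to fight for uniformity there. Recall that a simple lattice polytope $P\subset\R^d$ has the IDP precisely when, for every $k\ge 1$ and every $z\in kP\cap\Z^d$, one can split $z=p_1+\dots+p_k$ with $p_i\in P\cap\Z^d$. Peeling off one summand at a time, it is enough to show: for each $k\ge 2$ and each $z\in kP\cap\Z^d$ there is a lattice point in $R_{z,k}:=P\cap\bigl(z-(k-1)P\bigr)$; then $z-p\in(k-1)P\cap\Z^d$ and the claim follows by induction on $k$. The decisive case is $k=2$, where $R_z:=P\cap(z-P)$ is centrally symmetric about $z/2$, has all facet normals in $\cN(P)\cup\cN(-P)$, and contains a lattice point exactly when the desired splitting of $z$ exists. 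First I would fix attention on producing one lattice point in every such $R_z$.

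Next I would bring in simplicity and the long edges. Since $P$ is simple, at each vertex $v$ the $d$ primitive edge directions $u_1,\dots,u_d$ span a cone, and the edges have lattice length $\ge\ell$. The heuristic is that if $\ell$ is large then $R_z$ contains, near the vertex of $P$ closest to $z/2$, a full sub-parallelepiped $\{\,v+\sum\lambda_i u_i:0\le\lambda_i\le 1\,\}$, which already carries the lattice point $v$. In the paper's language this is an attempt to verify Gubeladze's convex-normality for $P$ directly from a lower bound on edge lengths; combined with the collapse established in the first part of the paper (so that $3$-convex-normality already forces all higher $k$), it would suffice to prove $3$-convex-normality under a threshold $\ell\ge\ell_0$. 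I would try to do this by decomposing $2P$ into the vertex stars of $P$, covering each star by translates of $P$ anchored at lattice points of the corresponding simple cone, and checking that the overlaps are themselves covered---an argument whose local pieces involve only the $d$ edges at one vertex.

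The hard part, and the reason the statement is (still) a conjecture rather than a theorem, is making $\ell_0$ \emph{uniform} in $d$. Guaranteeing a lattice point inside a $d$-dimensional convex body generically costs width proportional to its covering radius, and for general bodies the covering radius grows linearly in $d$ (simplices being the extremal obstruction); a naive Minkowski- or flatness-type estimate for $R_z$ therefore only yields $\ell_0=\ell_0(d)$ of order $d(d+1)$, reproducing the dimension-dependent bounds already in the literature rather than a constant. The pair result quoted in the abstract has the same defect: its hypothesis asks each edge of $P$ to be $d$ times the corresponding edge of $Q$, so iterating it cannot escape the dimension. To beat the barrier one must exploit that $R_z$ is far from an arbitrary body---it is centrally symmetric and its facet directions are confined to the finite set $\cN(P)\cup\cN(-P)$---and prove a \emph{dimension-free} covering-radius bound for this restricted class. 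That dimension-free estimate is the single step I cannot currently discharge; I would first calibrate $\ell_0$ by hand in dimensions $2$ and $3$, then try to propagate a constant bound by induction on $d$ that passes through one simple vertex cone at a time, so that each inductive step sees only bounded local complexity and no factor of $d$ accumulates.
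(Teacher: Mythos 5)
Your proposal founders on the quantifier in the statement before any geometry begins. In this paper (and in the workshop problem \cite{HHM07} it quotes), ``long means some invariant, uniform in the dimension'' asks for a threshold depending \emph{only on the dimension} --- uniform over all simple lattice polytopes of a fixed $d$, rather than varying with the individual polytope --- not for a constant independent of $d$. Under the intended reading the conjecture is not open: the paper records that Gubeladze proved it with threshold $4d(d+1)$, and the first half of the paper sharpens this to $2d(d+1)$ by an elementary induction that you never engage with: for a lattice polytope, $G(rP)+G(P)\subseteq G((r+1)P)$ (Lemma~\ref{A}), so $2$-convex-normality propagates to $c$-convex-normality for every $c$ (Lemma~\ref{Main lemma}); a $2$-convex-normal lattice polytope then has the IDP by peeling off one copy of $P$ at a time inside the chain $kP=G((k-1)P)+P$, and Gubeladze's edge-length criterion supplies $2$-convex-normality once every edge has lattice length at least $2d(d+1)$. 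The ``single step you cannot discharge'' --- a dimension-free covering-radius bound for the centrally symmetric bodies $R_z=P\cap(z-P)$ --- is thus not demanded by the statement; by treating it as the crux you have substituted a strictly harder (and genuinely open) problem for the one posed, and, as you concede yourself, you have not proved it. An argument ending in an acknowledged unfilled essential step is not a proof of anything.

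There is also an internal flaw worth flagging: your claim that ``the decisive case is $k=2$'' is unjustified. Knowing that $R_z=P\cap(z-P)$ contains a lattice point for every $z\in 2P\cap\Z^d$ does not yield the analogous statement for $R_{z,k}=P\cap\bigl(z-(k-1)P\bigr)$ when $k\geq 3$; the lattice-level $k=2$ statement simply does not bootstrap. The paper sidesteps exactly this obstruction by working with the stronger \emph{real} covering condition $cP=G((c-1)P)+P$ for all rational $c\in[2,k]$, which does propagate upward --- but only because $G((c-1)P)$ records lattice translates anchored at the vertices of $(c-1)P$, not bare lattice points, and because $P$ is a lattice polytope so that vertex-anchored points recombine (this is precisely the computation $(c-1)v+u+w+u'=cv+(w-v+u+u')$ in Lemma~\ref{A}). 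Your vertex-parallelepiped heuristic has a matching defect: the box $v+\sum\lambda_i u_i$ at the vertex $v$ of $P$ nearest $z/2$ lies in $R_z$ only if it also lies in $z-P$, which is not automatic and is exactly where the dimension-dependent edge-length hypothesis would re-enter. If you want a route to the statement as actually posed, prove $2$-convex-normality under $\ell(e)\geq 2d(d+1)$ and invoke the propagation lemma, rather than chasing a dimension-free flatness estimate.
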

This conjecture was then proved by Gubeladze \cite{Gub12} in the
following precise form.
\begin{theo*}
  Let $P$ be a lattice polytope of dimension $d$. If every edge of $P$
  has lattice length $\geq 4d(d+1)$, then $P$ has the integer
  decompostion property.
\end{theo*}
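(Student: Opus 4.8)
The plan is to prove the integer decomposition property by reducing it to a single ``peeling'' step and then to a purely local question about the corner cones of $P$. First I would record the standard reduction: $P$ has the IDP if and only if, for every $k \geq 2$ and every $z \in kP \cap \Z^d$, one can split off a single lattice point, i.e.\ there is $v \in P \cap \Z^d$ with $z - v \in (k-1)P$; the full statement then follows by induction on the dilation factor $k$. Writing $P = \{\, x \in \R^d : \langle n_i, x\rangle \le b_i,\ i = 1,\dots,m \,\}$ with primitive outer facet normals $n_i$ and $b_i \in \Z$, the admissible $v$ are exactly the lattice points of the \emph{overlap polytope}
$$P_z \;=\; P \cap \bigl(z - (k-1)P\bigr) \;=\; \bigl\{\, v : \langle n_i, z\rangle - (k-1)b_i \le \langle n_i, v\rangle \le b_i \ \text{ for all } i \,\bigr\}.$$
Thus $P_z$ is $P$ cut down by slabs of width $c_i := k b_i - \langle n_i, z\rangle \ge 0$ parallel to its facets, and it always contains the rational point $z/k$. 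Everything reduces to showing that \emph{$P_z$ contains a lattice point}.

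The second step is to localize $P_z$ at a vertex. For a facet $F_i$ through a vertex $v_0$ one has $c_i = k(b_i - \langle n_i, v_0\rangle)$, which vanishes exactly when $z$ sits over the corner $k v_0$; so near such a corner the overlap is pinned in the facet directions through $v_0$ and degenerates to $\{\, v_0 \,\}$, which is itself a lattice point. Away from corners the overlap is fatter, and the role of the long-edge hypothesis is to interpolate between these two regimes uniformly. Concretely I would pick the vertex $v_0$ realizing the relevant corner, replace $P$ locally by its tangent cone $v_0 + \cone(P - v_0)$ (legitimate because every edge has lattice length $\ge 4d(d+1)$, so $P$ coincides with this cone to a depth governed by the edge length), and describe $P_z$ near $v_0$ as a box-like region $\{\, v_0 + \sum_j s_j u_j : 0 \le s_j \le w_j \,\}$ in the primitive edge directions $u_j$, with widths $w_j$ controlled by the $c_i$ and by $k$. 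The task becomes: find a lattice point in such a region sitting inside a (generally non-unimodular) simplicial corner cone.

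The third step resolves the corner cone. After triangulating $\cone(P - v_0)$ into unimodular cones (for a simple vertex it is already simplicial, and non-simple vertices are treated by triangulating the corner), the existence of a lattice point in the box reduces to a rounding argument whose cost is the covering radius of the cone, bounded in terms of $d$. Tracking this cost through a flag of faces of length at most $d$, together with the denominator $k$ and a simplex covering bound of order $d$, is what produces the explicit requirement that the minimal edge length exceed $4d(d+1)$: the factor $d(d+1)$ reflects the flag length times the per-step covering cost, and the factor $4$ absorbs the rounding slack. This is, in effect, the content of Gubeladze's convex-normality certificate, convex-normality of $P$ being precisely the assertion that the overlaps $P_z$ always contain lattice points, which long edges force.

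The hard part will be the uniformity in the third step: guaranteeing that the \emph{thin} overlaps $P_z$ --- those arising for $z$ near, but not exactly over, a corner --- still contain a lattice point, simultaneously for all $k$ and all $z$. A naive inradius or ball-packing estimate is too weak, since $P_z$ can be arbitrarily thin in several directions at once, so the argument must exploit the simplicial corner structure rather than the inradius alone. Controlling the covering radius of the non-unimodular corner cones, and summing the rounding errors along a flag without losing the linear dependence on $d$, is where the genuine work and the precise constant $4d(d+1)$ reside.
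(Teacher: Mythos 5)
Your first two steps are sound but carry essentially no content: the reduction of the IDP to a ``peeling'' step, and the identification of the admissible peelings with the lattice points of the overlap polytope $P_z = P \cap \bigl(z-(k-1)P\bigr)$, are correct, but the assertion ``$P_z\cap\Z^d\neq\emptyset$ for all $k$ and all $z\in kP\cap\Z^d$'' is \emph{equivalent} to the IDP itself, so the entire theorem now lives in your third step. And there you only gesture at an argument: the claim that near a corner $P_z$ is ``a box-like region in the primitive edge directions'' already presumes a simple vertex and that the facets not through $v_0$ can be discarded (which itself needs the long-edge hypothesis in quantitative form); the claim that the relevant ``covering radius of the cone'' is ``bounded in terms of $d$'' is exactly what fails naively at non-unimodular corners and for overlaps that are thin in several directions at once -- a failure you yourself flag; and the assertion that tracking these costs ``through a flag of faces'' produces the constant $4d(d+1)$ is numerology, not a computation. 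You concede in your last paragraph that ``the genuine work and the precise constant $4d(d+1)$ reside'' in precisely this unproved step, and you fall back on ``this is, in effect, the content of Gubeladze's convex-normality certificate.'' That is a citation of \cite{Gub12}, not a proof: the decisive quantitative corner lemma is missing.

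Two further points of comparison. First, your aside that convex-normality is ``precisely the assertion that the overlaps $P_z$ always contain lattice points'' is wrong: that assertion is equivalent to the IDP, whereas $k$-convex-normality ($cP = G((c-1)P)+P$ for all rational $c\in[2,k]$) is a statement about \emph{all real points} of the dilates and is strictly stronger -- the unimodular triangle $\Delta_2$ has the IDP but is not $2$-convex-normal, as shown in Example~\ref{ConvNormalExample}. Second, the paper does not reprove the quoted theorem from scratch either; it takes Gubeladze's Lemma that edge lengths $\geq kd(d+1)$ force $k$-convex-normality as input, and its own contribution is the short argument (Lemma~\ref{A} and Lemma~\ref{Main lemma}) that $2$-convex-normality of a lattice polytope already implies the IDP, which improves the bound to $2d(d+1)$ (Corollary~\ref{result2}). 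So the route through convex-normality is not an avoidable detour you can wave at: either you supply your missing corner lemma -- an explicit bound, valid at non-simple vertices and for thin overlaps, guaranteeing a lattice point in $P_z$ -- or you must invoke the $k$-convex-normality machinery, in which case the paper's Lemma~\ref{Main lemma} route gives a cleaner and sharper statement than the one you set out to prove.
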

He proves this theorem in two steps.\\
He first introduces the notion of $k$-convex-normality and proves that
a polytope is $k$-convex-normal if every edge has lattice length
$\geq kd(d+1)$.
Then he shows, that $4$-convex-normal lattice polytopes have the IDP.\\

In the first part of the present paper we further examine
$k$-convex-normal polytopes and show that if $P$ is a lattice polytope
and $k$-convex-normal for some $k\geq 3$, then $P$ is also
$(k+1)$-convex-normal (\textbf{Theorem~\ref{result}}). The lemma used
to prove this theorem, also allows us to improve Gubeladze's bound to
$2d(d+1)$ (\textbf{Corollary~\ref{result2}}).\\

In the second part of the paper we extend the notion of convex-normal
polytopes to pairs of polytopes. We show that given two polytopes $P$
and $Q$, the map $(Q\cap \Z^d) \times (P\cap \Z^d)  \to  (Q+P)\cap
\Z^d$ given by $ (q,p)\mapsto q + p $ is surjective, if the normal fan
of $P$ is a refinement of the normal fan of $Q$ and every edge of $P$
is at least $d$ times as long as its corresponding edge in
$Q$. (\textbf{Theorem~\ref{mainB}})\\

\noindent
\emph{Acknowledgements } We would like to thank Petra Meyer. The first
part of the paper grew out of her master thesis.

\section{Convex-normality revisited}

Let $P \subseteq \R^d$ be a lattice polytope. Then $P$ has the
\emph{integer decompostion property (IDP)}, if for all $k\in \N$ and
all $z \in kP\cap \Z^d$, there exist $x_1,\ldots , x_k \in
P\cap \Z^d$ such that 
\[
	z = x_1 + \cdots + x_k \,.	
\]
Every one or two dimensional lattice polytope has the integer
decompostion property. In dimension $3$ however already simplices do
not need to posses the IDP.\\
For example $P=\conv \{(0,0,0),(1,1,0),(1,0,1),(0,1,1)\}$ does not
have the IDP as $(1,1,1)\in 2P$ is not the sum of two lattice points
in $P$.
\\
Given a rational polytope $Q$ with vertex set $\ver(Q)$ we set
\[
G(Q):=\bigcup_{v\in\ver(Q)} (v+\Z^d)\cap Q \,,
\]
that is, we base the lattice in one vertex after the other and take
the union of those shifted lattices inside $Q$.

Following Gubeladze, we call a rational polytope $P\subseteq \R^d$
\emph{$k$-convex-normal} for some $k\in\Q$, if for all rational $c\in
[2,k]$:
\[
	cP = G((c-1)P) + P \,.
\]
Observe that the inclusion $\supseteq$ is always true.

\begin{example}\label{ConvNormalExample}
In the following picture the polytope $Q=\conv\{
(0,0),(\frac{3}{2},0),(0,\frac{3}{2}) \}$ we get
$G(Q)=\{(0,0),(1,0),(0,1),\quad
(\frac{3}{2},0),(\frac{1}{2},0),(\frac{1}{2},1),\quad
(0,\frac{3}{2}),(0,\frac{1}{2}),(1,\frac{1}{2})\}$.

\begin{center}
\begin{tikzpicture}[scale=0.8]
    \draw[step=1cm,gray,very thin] (-0.9,-0.9) grid (3.9,3.9);
    \draw[->] (-1,0) -- (4,0);
    \draw[->] (0,-1) -- (0,4);
                \begin{pgftransparencygroup}
					\pgfsetfillopacity{0.7}
					\filldraw[fill=blue, draw=blue!50!black] (0,0) -- (1.5,0) -- (0,1.5) -- cycle;
				\end{pgftransparencygroup}
   \filldraw   (0,0) circle (3pt);
       \filldraw[color=red]   (1.5,0) circle  (3pt);
       \filldraw[color=yellow]   (0,1.5) circle (3pt);
                \filldraw[color=red]   (1.5,0) circle  (3pt);
                \filldraw[color=yellow]   (0,1.5) circle (3pt);
				\filldraw[color=blue]  (0,0) circle (3pt);
\end{tikzpicture}
\begin{tikzpicture}[scale=0.8]
    \draw[step=1cm,gray,very thin] (-0.9,-0.9) grid (3.9,3.9);
    \draw[->] (-1,0) -- (4,0);
    \draw[->] (0,-1) -- (0,4);
                \begin{pgftransparencygroup}
					\pgfsetfillopacity{0.7}
					\filldraw[fill=blue, draw=blue!50!black] (0,0) -- (1.5,0) -- (0,1.5) -- cycle;
				\end{pgftransparencygroup}
   \filldraw   (0,0) circle (3pt);
       \filldraw[color=red]   (1.5,0) circle  (3pt);
       \filldraw[color=yellow]   (0,1.5) circle (3pt);
                \filldraw[color=black]   (3,0) circle (3pt);
                \filldraw[color=black]   (0,3) circle (3pt);
                \draw[line width=0.2mm]   (0,0) -- (3,0) -- (0,3) -- cycle;
                \filldraw[color=red]   (1.5,0) circle  (3pt);
                \filldraw[color=yellow]   (0,1.5) circle (3pt);
            	\filldraw[color=yellow]  (0,0.5) circle (3pt);
                \filldraw[color=yellow]  (1,0.5) circle (3pt);
		       	\filldraw[color=red]  (0.5,0) circle (3pt);
		       	\filldraw[color=red]  (0.5,1) circle (3pt);
				\filldraw[color=blue]  (0,1) circle (3pt);
				\filldraw[color=blue]  (1,0) circle (3pt);
				\filldraw[color=blue]  (0,0) circle (3pt);
\end{tikzpicture}
\begin{tikzpicture}[scale=0.8]
    \draw[step=1cm,gray,very thin] (-0.9,-0.9) grid (3.9,3.9);
    \draw[->] (-1,0) -- (4,0);
    \draw[->] (0,-1) -- (0,4);
                \begin{pgftransparencygroup}
					\pgfsetfillopacity{0.7}
					\filldraw[fill=blue, draw=blue!50!black] (0,0) -- (1.5,0) -- (0,1.5) -- cycle;
				\end{pgftransparencygroup}
   \filldraw   (0,0) circle (3pt);
       \filldraw[color=red]   (1.5,0) circle  (3pt);
       \filldraw[color=yellow]   (0,1.5) circle (3pt);
                \filldraw[color=black]   (3,0) circle (3pt);
                \filldraw[color=black]   (0,3) circle (3pt);
                \draw[line width=0.2mm]   (0,0) -- (3,0) -- (0,3) -- cycle;
                \filldraw[color=red]   (1.5,0) circle  (3pt);
                \filldraw[color=yellow]   (0,1.5) circle (3pt);
            	\filldraw[color=yellow]  (0,0.5) circle (3pt);
                \filldraw[color=yellow]  (1,0.5) circle (3pt);
		       	\filldraw[color=red]  (0.5,0) circle (3pt);
		       	\filldraw[color=red]  (0.5,1) circle (3pt);
				\filldraw[color=blue]  (0,1) circle (3pt);
				\filldraw[color=blue]  (1,0) circle (3pt);
				\filldraw[color=blue]  (0,0) circle (3pt);
				\begin{pgftransparencygroup}
						\pgfsetfillopacity{0.8}
						\filldraw[fill=yellow, draw=yellow!50!black] (0,1.5) -- (1.5,1.5) -- (0,3) -- cycle;
				\end{pgftransparencygroup}
				\begin{pgftransparencygroup}
						\pgfsetfillopacity{0.8}
						\filldraw[fill=yellow, draw=yellow!50!black] (0,0.5) -- (1.5,0.5) -- (0,2) -- cycle;
				\end{pgftransparencygroup}
				\begin{pgftransparencygroup}
						\pgfsetfillopacity{0.8}
						\filldraw[fill=yellow, draw=yellow!50!black] (1,0.5) -- (2.5,0.5) -- (1,2) -- cycle;
				\end{pgftransparencygroup}
				\begin{pgftransparencygroup}
						\pgfsetfillopacity{0.8}
						\filldraw[fill=red, draw=red!50!black] (0.5,0) -- (2,0) -- (0.5,1.5) -- cycle;
				\end{pgftransparencygroup}
				\begin{pgftransparencygroup}
						\pgfsetfillopacity{0.8}
						\filldraw[fill=red, draw=red!50!black] (1.5,0) -- (3,0) -- (1.5,1.5) -- cycle;
				\end{pgftransparencygroup}
				\begin{pgftransparencygroup}
						\pgfsetfillopacity{0.8}
						\filldraw[fill=red, draw=red!50!black] (0.5,1) -- (0.5,2.5) -- (2,1) -- cycle;
				\end{pgftransparencygroup}				
\end{tikzpicture}
\end{center}
The colors in the figure encode which vertex produced the base point
for the corresponding copy of $P$ and we can see that $Q$ is
$2$-convex-normal.\\
An easy example of a polytope which is not convex normal is the
$2$-dimensional standard simplex $Q=\conv \{(0,0),(1,0),(0,1)\}$:
\begin{center}
\begin{tikzpicture}[scale=1]
    \draw[step=1cm,gray,very thin] (-0.9,-0.9) grid (2.9,2.9);
    \draw[->] (-1,0) -- (3,0);
    \draw[->] (0,-1) -- (0,3);
                \begin{pgftransparencygroup}
					\pgfsetfillopacity{0.7}
					\filldraw[fill=blue, draw=blue!50!black] (0,0) -- (1,0) -- (0,1) -- cycle;
				\end{pgftransparencygroup}
				                \draw[line width=0.2mm]   (0,0) -- (2,0) -- (0,2) -- cycle;
   				\filldraw[color=blue]   (0,0) circle (3pt);
       			\filldraw[color=red]   (1,0) circle  (3pt);
       			\filldraw[color=yellow]   (0,1) circle (3pt);
                \filldraw[color=black]   (2,0) circle (3pt);
                \filldraw[color=black]   (0,2) circle (3pt);
                \filldraw[color=red]   (1,0) circle  (3pt);
\end{tikzpicture}
\begin{tikzpicture}[scale=1]
    \draw[step=1cm,gray,very thin] (-0.9,-0.9) grid (2.9,2.9);
    \draw[->] (-1,0) -- (3,0);
    \draw[->] (0,-1) -- (0,3);
                \begin{pgftransparencygroup}
					\pgfsetfillopacity{0.7}
					\filldraw[fill=blue, draw=blue!50!black] (0,0) -- (1,0) -- (0,1) -- cycle;
				\end{pgftransparencygroup}
				                \draw[line width=0.2mm]   (0,0) -- (2,0) -- (0,2) -- cycle;
                \filldraw[color=red]   (1,0) circle  (3pt);
                \begin{pgftransparencygroup}
					\pgfsetfillopacity{0.8}
                	\filldraw[fill=red, draw=red!50!black] (1,0) -- (2,0) -- (1,1) -- cycle;
				\end{pgftransparencygroup}
				\begin{pgftransparencygroup}
					\pgfsetfillopacity{0.8}
                	\filldraw[fill=yellow, draw=yellow!50!black] (0,1) -- (0,2) -- (1,1) -- cycle;
				\end{pgftransparencygroup}
   				\filldraw[color=blue]   (0,0) circle (3pt);
       			\filldraw[color=red]   (1,0) circle  (3pt);
       			\filldraw[color=yellow]   (0,1) circle (3pt);
                \filldraw[color=black]   (2,0) circle (3pt);
                \filldraw[color=black]   (0,2) circle (3pt);
\end{tikzpicture}
\end{center}
\end{example}
Our first lemma highlights a special behavior of $G(rP)$, when $P$ is
a lattice polytope.

\begin{lemma}\label{A}
  Let $P$ be a lattice polytope and $r\in \R_{>0}$, then 
  \[ G(rP)+G(P) \subseteq  G((r+1)P). \]
\end{lemma}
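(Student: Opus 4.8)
The plan is to take an arbitrary element $x+y$ with $x\in G(rP)$ and $y\in G(P)$ and to exhibit a single vertex of $(r+1)P$ that serves as its base point, thereby placing $x+y$ in $G((r+1)P)$. The first observation I would record is that, because $P$ is a lattice polytope, every vertex $v$ of $P$ lies in $\Z^d$, so $v+\Z^d=\Z^d$ and hence $G(P)=P\cap\Z^d$. Thus $y$ is simply an integer vector lying in $P$, and the union over vertices in the definition of $G(P)$ collapses.

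Next I would use that scaling by a positive real is a bijection of face lattices, so the vertices of $rP$ are exactly $\{\,rv : v\in\ver(P)\,\}$, and likewise those of $(r+1)P$ are $\{\,(r+1)v : v\in\ver(P)\,\}$. Writing $x\in (rv_1+\Z^d)\cap rP$ for some $v_1\in\ver(P)$, the key computation is the base-point shift
\[
  x+y \in (rv_1+\Z^d)+\Z^d = rv_1+\Z^d = (r+1)v_1+\Z^d,
\]
where the last equality holds precisely because $v_1\in\Z^d$. This identifies $(r+1)v_1$, a vertex of $(r+1)P$, as a candidate base point for $x+y$.

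Finally I would check membership in the polytope itself: since $x\in rP$ and $y\in P$, the Minkowski-sum identity $rP+P=(r+1)P$ for the convex set $P$ gives $x+y\in (r+1)P$. Combining the two facts, $x+y\in\bigl((r+1)v_1+\Z^d\bigr)\cap (r+1)P\subseteq G((r+1)P)$, which is exactly the desired inclusion.

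I do not expect a serious obstacle here; the real content of the lemma is the bookkeeping observation that the base vertex $rv_1$ of $rP$ and the base vertex $(r+1)v_1$ of $(r+1)P$ differ by the integer vector $v_1$, so that translating by a lattice point of $P$ does not disturb the coset. The only point that genuinely uses the lattice hypothesis on $P$ is this integrality of $v_1$ (and the resulting simplification $G(P)=P\cap\Z^d$); for a merely rational $P$ the coset bookkeeping would break down, which is consistent with the lemma being stated for lattice polytopes.
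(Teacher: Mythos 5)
Your proof is correct and takes essentially the same route as the paper's: both identify $(r+1)v_1$ as the base vertex, using integrality of the vertices of $P$ to shift the coset $rv_1+\Z^d$ to $(r+1)v_1+\Z^d$, and both verify $x+y\in(r+1)P$ from $x\in rP$ and $y\in P$. The only cosmetic difference is that you collapse $G(P)=P\cap\Z^d$ at the outset, while the paper writes $y=w+u'$ and absorbs $w-v$ into the integer remainder---the same bookkeeping.
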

\begin{proof}
  Let $x = rv + u \in G(rP)$ and $y= w + u'\in G(P)$ with $v,w\in
  \ver(P)$ and $u,u',v,w\in \Z^d$. As $x\in rP$ and $y\in P$ it
  follows that $z=x+y\in (r+1)P$ and also
  \[
  z = x + y = rv + u + w + u' = (r+1)v + ( w - v + u + u') \in
  \ver((r+1)P)+\Z^d
  \]
  so $z\in G((r+1)P)$.
\end{proof}
The previous lemma yields the induction step for our first theorem.
\begin{lemma}\label{Main lemma}
	Let $P$ be a $2$-convex-normal lattice polytope, then
	\[ G((c-2)P) + P=(c-1)P \text{ implies } G((c-1)P) + P = cP. \]
\end{lemma}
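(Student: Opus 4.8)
The plan is to establish only the inclusion $cP \subseteq G((c-1)P) + P$, since the reverse inclusion $G((c-1)P) + P \subseteq (c-1)P + P = cP$ holds automatically from $G((c-1)P) \subseteq (c-1)P$. Thus the whole statement should collapse into a short chain of Minkowski-sum manipulations that I then force to be equalities. The only inputs I expect to need are the Minkowski identity for convex sets, the hypothesis at level $c-1$, the defining identity of $2$-convex-normality, and Lemma~\ref{A}.

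First I would rewrite $cP$ as a Minkowski sum and feed in the two available identities. Using $cP = (c-1)P + P$ (valid for convex $P$ and $c > 1$) together with the assumed equality $(c-1)P = G((c-2)P) + P$, I obtain
\[ cP = (c-1)P + P = G((c-2)P) + P + P = G((c-2)P) + 2P. \]
Next I would invoke the $2$-convex-normality of $P$, which is precisely the statement $2P = G(P) + P$, to replace the $2P$ block and arrive at
\[ cP = G((c-2)P) + G(P) + P. \]

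The step carrying the real content is recognizing that Lemma~\ref{A} is tailored exactly to merge the two $G$-terms. Applying it with $r = c-2$ (legitimate because $c > 2$, so $r \in \R_{>0}$) gives $G((c-2)P) + G(P) \subseteq G((c-1)P)$, whence
\[ cP = \big(G((c-2)P) + G(P)\big) + P \subseteq G((c-1)P) + P. \]
Together with the trivial reverse inclusion noted above, every containment in the chain is forced to be an equality, yielding $cP = G((c-1)P) + P$.

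The main obstacle is not any hard estimate but rather lining up the three ingredients so that the intermediate expression $G((c-2)P) + G(P) + P$ appears in exactly the shape that Lemma~\ref{A} absorbs; the insight is that the lemma lets one fuse $G((c-2)P)$ and $G(P)$ into a single $G((c-1)P)$ at the cost of an inclusion that the reverse containment then upgrades to equality. I would be careful with the bookkeeping on the scaling parameter, checking that $r = c-2$ is strictly positive and that the hypothesis supplies $G((c-2)P)$ at precisely this level, since a single off-by-one in the scaling would break the application of Lemma~\ref{A}.
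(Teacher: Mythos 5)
Your proposal is correct and follows essentially the same route as the paper's own proof: rewrite $cP = (c-1)P + P$, substitute the hypothesis to get $G((c-2)P) + 2P$, apply $2$-convex-normality to replace $2P$ by $G(P)+P$, and fuse the two $G$-terms via Lemma~\ref{A} with $r = c-2$, the trivial reverse inclusion giving equality. Your explicit check that $r = c-2 > 0$ is a nice touch the paper leaves implicit.
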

\begin{proof}
  $G((c-1)P) + P\subseteq cP$ is always true, hence we only have to
  show the other direction $cP \subseteq G((c-1)P) + P$: 
  \[
  cP = (c-1)P + P = \left( G((c-2)P) + P\right) + P  = G((c-2)P) + 2P
  \]
  but $P$ is $2$-convex normal so that $2P = G(P) + P$ and hence:
  \[
  cP = G((c-2)P) + 2P = G((c-2)P) + G(P) + P \subseteq G((c-1)P) + P
  \]
  where the inclusion follows from Lemma~\ref{A}.
\end{proof}
This lemma has two very nice consequences.
\begin{theo}\label{result}
  Let $P$ be a lattice polytope. If $P$ is $3$-convex-normal, then $P$
  is also $k$-convex-normal, for all $k\geq 2$.
\end{theo}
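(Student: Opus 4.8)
The plan is to prove the statement by a straightforward induction whose only engine is Lemma~\ref{Main lemma}: that lemma advances the defining equality $cP=G((c-1)P)+P$ by one unit in the scaling parameter, while $3$-convex-normality supplies this equality on the starting range $c\in[2,3]$.

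First I would record a trivial reduction. Since $[2,2]\subseteq[2,3]$, a $3$-convex-normal polytope is in particular $2$-convex-normal, so the hypothesis of Lemma~\ref{Main lemma} is satisfied and the lemma is available throughout. It then suffices to prove that $cP=G((c-1)P)+P$ holds for \emph{every} rational $c\geq 2$, since this is precisely the assertion that $P$ is $k$-convex-normal for all $k\geq 2$.

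To obtain the equality at an arbitrary rational $c\geq 2$, I would choose an integer $m\geq 0$ with $c-m\in[2,3]$ --- found by subtracting $1$ from $c$ until the parameter first lands in $[2,3]$, which is possible because the interval $[c-3,c-2]$ of admissible shifts has length $1$ and meets $[0,\infty)$. By $3$-convex-normality the equality $(c-m)P=G((c-m-1)P)+P$ already holds. I would then apply Lemma~\ref{Main lemma} a total of $m$ times: writing $c_i=c-m+i$ for $0\leq i\leq m$, each application turns the equality at $c_i$ into the equality at $c_{i+1}=c_i+1$, and is legitimate because $c_i\geq c-m\geq 2$ keeps the smallest dilation that appears, $(c_{i+1}-2)P=(c_i-1)P$, well defined. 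After $m$ steps one arrives at the equality at $c_m=c$.

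Since $c\geq 2$ was an arbitrary rational number, the equality holds on all of $[2,\infty)$, which is exactly $k$-convex-normality for every $k\geq 2$. The argument is mechanical once Lemma~\ref{Main lemma} is in hand; the only point demanding attention is that every intermediate parameter $c_i$ stays $\geq 2$, so that the lemma genuinely applies and one never scales $P$ by a negative factor. I anticipate no real obstacle here, since the substantive work has already been absorbed into Lemmas~\ref{A} and~\ref{Main lemma}.
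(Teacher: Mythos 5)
Your proof is correct and is exactly the argument the paper intends: the paper omits the details, remarking only that Lemma~\ref{Main lemma} ``yields the induction step,'' and your write-up supplies precisely that induction --- shift an arbitrary rational $c\geq 2$ down by an integer $m$ into $[2,3]$ where $3$-convex-normality gives the base equality, then apply Lemma~\ref{Main lemma} $m$ times. Your attention to the side conditions (that $3$-convex-normality implies the $2$-convex-normality hypothesis of the lemma, and that all intermediate parameters stay $\geq 2$) is exactly what makes the induction legitimate, and also explains why the full interval $[2,3]$ --- not just $c=2$ --- is needed as the base case.
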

Let $e$ be the edge-vector of a rational polytope $P$ connecting
vertices $v$ and $w$, such that $v + e = w$. By $\ell(e)$ we denote
the lattice length of $e$, i.e. let $u$ be the smallest integer vector
on the line spanned by $w-v$ then $e=ku$ for some $k\in \Q$ and
$\ell(e):=\vert k \vert$.
The previous Theorem together with  \cite[Lemma $6.2$]{Gub12} implies
that a lower bound of $\ell(e) \geq 3d(d+1)$ for every edge $e$ of $P$
would be enough. But using Lemma~\ref{Main lemma} directly, we can do
better.
\begin{cor}
  Let $P$ be a lattice polytope. If $P$ is 2-convex-normal, then $P$
  has the integer decompositions property.
\end{cor}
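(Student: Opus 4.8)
The plan is to upgrade the Minkowski-sum identities supplied by Lemma~\ref{Main lemma} into the pointwise decomposition statement that IDP requires, exploiting the fact that for a \emph{lattice} polytope the operator $G$ applied to an integer dilate returns nothing more exotic than the lattice points of that dilate.

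First I would bootstrap Lemma~\ref{Main lemma} to obtain, for every integer $n \ge 2$,
\[
  nP = G((n-1)P) + P .
\]
The induction runs on $n$. The base case $n = 2$ is literally the defining instance $2P = G(P)+P$ of $2$-convex-normality (the case $c = 2$). For the step ($n \ge 3$), assuming the identity $G((n-2)P)+P = (n-1)P$ at level $n-1$, I apply Lemma~\ref{Main lemma} with $c = n$; since $P$ is $2$-convex-normal, its hypothesis $G((c-2)P)+P = (c-1)P$ is exactly the inductive assumption, and its conclusion $G((c-1)P)+P = cP$ is the desired identity at level $n$.

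Next comes the observation that turns these set identities into arithmetic. Because $P$ is a lattice polytope and $n-1$ is an integer, every vertex of $(n-1)P$ lies in $\Z^d$, so each shifted lattice $v + \Z^d$ appearing in the definition of $G((n-1)P)$ equals $\Z^d$ itself; hence
\[
  G((n-1)P) = (n-1)P \cap \Z^d .
\]
Combining this with the identity above yields $nP = \big((n-1)P \cap \Z^d\big) + P$. To finish, I would prove IDP by induction on $n$: the case $n=1$ is trivial, and for $z \in nP \cap \Z^d$ the identity lets me write $z = g + p$ with $g \in (n-1)P \cap \Z^d$ and $p \in P$. Since both $z$ and $g$ are integral, $p = z - g \in P \cap \Z^d$ is a lattice point, and the inductive hypothesis expresses $g$ as a sum of $n-1$ lattice points of $P$; appending $p$ writes $z$ as a sum of $n$ lattice points of $P$.

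The conceptual crux, and essentially the only place where anything happens, is the identification $G((n-1)P) = (n-1)P \cap \Z^d$, which is precisely what fails for non-integer dilates and is the reason convex-normality is phrased via $G$ rather than via lattice points directly. Everything else is two nested inductions whose steps are immediate from Lemma~\ref{Main lemma} and from the integrality of $z$ and $g$. I expect no genuine obstacle; the main care needed is to keep the two roles of the index ($c$ versus $n$) straight and to confirm that the base case of the bootstrapping is exactly the single instance of $2$-convex-normality that the hypothesis provides.
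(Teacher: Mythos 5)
Your proposal is correct and follows essentially the same route as the paper: both arguments iterate Lemma~\ref{Main lemma} (starting from the $c=2$ instance of $2$-convex-normality) to get $nP = G((n-1)P)+P$ for all integers $n\geq 2$, use the identification $G((n-1)P)=(n-1)P\cap\Z^d$ valid for integer dilates of lattice polytopes, and then induct on $n$, peeling off one lattice point of $P$ at a time. Your write-up simply makes explicit the two nested inductions that the paper's terser proof leaves implicit.
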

\begin{proof}
  As $P$ is $2$-convex-normal, by Lemma~\ref{Main lemma} we have that
  $kP = G((k-1)P) + P$ for all $k \in \N$.\\ Now given $z \in kP\cap
  \Z^d$ for some $k\in \N$, we know that $z = x + y$ where $x\in
  G((k-1)P)=(k-1)P\cap \Z^d$ and therefore $y\in P\cap\Z^d$. By
  induction we can find $x_1,\ldots, x_{k-1}\in P\cap\Z^d$ such that
  $x = x_1 + \ldots + x_{k-1}$.
\end{proof}
Now, combine the last corollary with \cite[Theorem $1.2$]{Gub12}.
\begin{cor}\label{result2}
  Let $P$ be a lattice polytope. If for every edge $e$ of $P$ the
  lattice length $ \ell(e)\geq 2d(d+1)$, then $P$ has the integer
  decompostion property.
\end{cor}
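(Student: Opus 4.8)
The plan is to simply combine the preceding corollary with Gubeladze's edge-length criterion, so this will be a short deduction rather than a fresh argument. The two ingredients are already in hand: on one side, the corollary just established that every $2$-convex-normal lattice polytope has the integer decomposition property (this is the genuinely new input, coming from Lemma~\ref{Main lemma}); on the other side, \cite[Theorem~$1.2$]{Gub12} supplies the geometric sufficient condition for $k$-convex-normality in terms of edge lengths.

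First I would recall Gubeladze's result in the precise form stated in the introduction: a lattice polytope is $k$-convex-normal whenever every edge has lattice length at least $kd(d+1)$. The only move needed here is to specialize this to $k=2$. The hypothesis of the corollary is exactly $\ell(e)\geq 2d(d+1)$ for every edge $e$ of $P$, which is the $k=2$ instance of Gubeladze's threshold $kd(d+1)$. Hence \cite[Theorem~$1.2$]{Gub12} applies and yields that $P$ is $2$-convex-normal.

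Second, I would feed this into the previous corollary: since $P$ is now known to be $2$-convex-normal, that corollary concludes that $P$ has the integer decomposition property, which is precisely the claim. This closes the argument.

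There is no real obstacle in this final step; the substance lies entirely upstream. The one point worth double-checking is that the constant matches exactly, namely that specializing $kd(d+1)$ at $k=2$ gives the bound $2d(d+1)$ appearing in the statement, and that Gubeladze's criterion is invoked with the same normalization of lattice length $\ell(e)$ used here. The improvement over Gubeladze's original $4d(d+1)$ comes solely from the stronger corollary (needing only $2$-convex-normality rather than $4$-convex-normality), so the proof of \textbf{Corollary~\ref{result2}} itself is just the composition of these two facts.
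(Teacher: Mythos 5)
Your proposal is correct and coincides with the paper's own argument: the paper proves this corollary exactly by combining \cite[Theorem~1.2]{Gub12} (every edge of lattice length at least $kd(d+1)$ implies $k$-convex-normality, specialized at $k=2$) with the immediately preceding corollary that $2$-convex-normal lattice polytopes have the integer decomposition property. Your check that the constant $2d(d+1)$ is the $k=2$ instance of Gubeladze's threshold is precisely the point, and nothing further is needed.
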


\section{Convex-normality for pairs of polytopes}
In this chapter we extend the above definitions and results to pairs
of polytopes.
\begin{defi}
A pair of (rational) polytopes $(Q,P)$ is called \emph{ convex -
  normal }, if
\[
Q + P = G(Q) + P 
\]
\end{defi}

Note, that we only have to show $Q+P \subseteq G(Q)+P$ as the other
direction is always true since $G(Q)\subset Q$. Furthermore this
notion is translation invariant, as a small calculation shows that
$G(Q-w) = G(Q) -w$. Hence we can set two vertices $v\in \ver (P)$ and
$w\in \ver(Q)$ to $\0$.
\begin{example}
As seen in \textbf{Example \ref{ConvNormalExample}} the pair
$(1.5\cdot\Delta_2,1.5\cdot\Delta_2)$ is convex-normal and the pair
$(\Delta_2,\Delta_2)$ is not. More generally, $P$ is $2$-convex-normal
if and only if $(P,P)$ is convex-normal.
\end{example}
\begin{example}
Convex-normality is not symmetric. When we set
\[
P=\conv \begin{pmatrix}
0 & 1 & 0 & 1 \\
0 & 0 & 1 & 1
\end{pmatrix}\quad \text{ and } \quad Q=\conv\begin{pmatrix}
0 & 1 & 0 & 1 \\
0 & 0 & 0.7 & 0.7
\end{pmatrix}
\]
the following pictures illustrate that $G(Q) + P = Q + P$ but $G(P) + Q \not = P + Q$:\\
\begin{center}
\begin{tikzpicture}[scale=1]
    \draw[step=1cm,gray,very thin] (-0.9,-0.9) grid (2.9,2.9);
    \draw[->] (-1,0) -- (3,0);
    \draw[->] (0,-1) -- (0,3);
                \begin{pgftransparencygroup}
					\pgfsetfillopacity{0.6}
					\filldraw[fill=blue, draw=blue!50!black] (0,0) -- (1,0) -- (1,1) -- (0,1) -- cycle;
				\end{pgftransparencygroup}
   		\filldraw   (0,0) circle (3pt);
   		\filldraw	(1,0) circle (3pt)
   		   		node[below=1.3cm]{$P$};
   		\filldraw	(0,1) circle (3pt);
   		\filldraw	(1,1) circle (3pt);
\end{tikzpicture}
\begin{tikzpicture}[scale=1]
    \draw[step=1cm,gray,very thin] (-0.9,-0.9) grid (2.9,2.9);
    \draw[->] (-1,0) -- (3,0);
    \draw[->] (0,-1) -- (0,3);
                \begin{pgftransparencygroup}
					\pgfsetfillopacity{0.6}
					\filldraw[fill=red, draw=blue!50!black] (0,0) -- (1,0) -- (1,0.7) -- (0,0.7) -- cycle;
				\end{pgftransparencygroup}
   		\filldraw   (0,0) circle (3pt);
   		\filldraw	(1,0) circle (3pt)
   				node[below=1.27cm]{$Q$};
   		\filldraw	(0,0.7) circle (3pt);
   		\filldraw	(1,0.7) circle (3pt);
\end{tikzpicture}
\begin{tikzpicture}[scale=1]
    \draw[step=1cm,gray,very thin] (-0.9,-0.9) grid (2.9,2.9);
    \draw[->] (-1,0) -- (3,0);
    \draw[->] (0,-1) -- (0,3);
      		\begin{pgftransparencygroup}
      			\pgfsetfillopacity{0.3}
      			\filldraw[fill=gray, draw=blue!50!black] (0,0) -- (2,0) -- (2,1.7) -- (0,1.7) -- cycle;
      		\end{pgftransparencygroup}
      	\filldraw	(1,0) circle (0pt)
      			node[below=1.3cm]{$P+Q$};	
\end{tikzpicture}

\begin{tikzpicture}[scale=1]
    \draw[step=1cm,gray,very thin] (-0.9,-0.9) grid (2.9,2.9);
    \draw[->] (-1,0) -- (3,0);
    \draw[->] (0,-1) -- (0,3);
      		\begin{pgftransparencygroup}
      			\pgfsetfillopacity{0.3}
      			\filldraw[fill=gray, draw=gray!50!black] (0,0) -- (2,0) -- (2,1.7) -- (0,1.7) -- cycle;
      		\end{pgftransparencygroup}	
      		\begin{pgftransparencygroup}
      			\pgfsetfillopacity{0.6}
      			\filldraw[fill=red, draw=red!50!black] (0,0) -- (1,0) -- (1,0.7) -- (0,0.7) -- cycle;
      		\end{pgftransparencygroup}
      		\begin{pgftransparencygroup}
      			\pgfsetfillopacity{0.6}
      			\filldraw[fill=red, draw=red!50!black] (1,0) -- (2,0) -- (2,0.7) -- (1,0.7) -- cycle;
      		\end{pgftransparencygroup}		
      		\begin{pgftransparencygroup}
      			\pgfsetfillopacity{0.6}
      			\filldraw[fill=red, draw=red!50!black] (0,1) -- (1,1) -- (1,1.7) -- (0,1.7) -- cycle;
      		\end{pgftransparencygroup}	
      		\begin{pgftransparencygroup}
      			\pgfsetfillopacity{0.6}
      			\filldraw[fill=red, draw=red!50!black] (1,1) -- (2,1) -- (2,1.7) -- (1,1.7) -- cycle;
      		\end{pgftransparencygroup}	
      		      		\filldraw   (0,0) circle (3pt);
      		\filldraw	(1,0) circle (3pt)
      				node[below=1.3cm]{$G(P)+Q$};
      		\filldraw	(0,1) circle (3pt);
      		\filldraw	(1,1) circle (3pt);
\end{tikzpicture}
\begin{tikzpicture}[scale=1]
    \draw[step=1cm,gray,very thin] (-0.9,-0.9) grid (2.9,2.9);
    \draw[->] (-1,0) -- (3,0);
    \draw[->] (0,-1) -- (0,3);
      		\begin{pgftransparencygroup}
      			\pgfsetfillopacity{0.3}
      			\filldraw[fill=gray, draw=blue!50!black] (0,0) -- (2,0) -- (2,1.7) -- (0,1.7) -- cycle;
      		\end{pgftransparencygroup}	
      		\begin{pgftransparencygroup}
      			\pgfsetfillopacity{0.6}
      			\filldraw[fill=blue, draw=blue!50!black] (0,0) -- (1,0) -- (1,1) -- (0,1) -- cycle;
      		\end{pgftransparencygroup}
      		\begin{pgftransparencygroup}
      			\pgfsetfillopacity{0.6}
      			\filldraw[fill=blue, draw=blue!50!black] (1,0) -- (2,0) -- (2,1) -- (1,1) -- cycle;
      		\end{pgftransparencygroup}		
      		\begin{pgftransparencygroup}
      			\pgfsetfillopacity{0.6}
      			\filldraw[fill=blue, draw=blue!50!black] (0,0.7) -- (1,0.7) -- (1,1.7) -- (0,1.7) -- cycle;
      		\end{pgftransparencygroup}	
      		\begin{pgftransparencygroup}
      			\pgfsetfillopacity{0.6}
      			\filldraw[fill=blue, draw=blue!50!black] (1,0.7) -- (2,0.7) -- (2,1.7) -- (1,1.7) -- cycle;
      		\end{pgftransparencygroup}	
      		\filldraw   (0,0) circle (3pt);
      		\filldraw	(1,0) circle (3pt)
      				node[below=1.3cm]{$G(Q)+P$};
      		\filldraw	(0,0.7) circle (3pt);
      		\filldraw	(1,0.7) circle (3pt);      		
\end{tikzpicture}
\end{center}
\end{example}

The second definition we need is an extension of the integer
decomposition property to pairs of polytopes:

\begin{defi}
A pair of lattice polytopes $(Q,P)$ has the \emph{integer
  decomposition property (IDP)}, if the map
\[
\begin{array}{ccccc}
  (Q\cap \Z^d) &\times& (P\cap \Z^d) & \to & (Q+P)\cap \Z^d  \\
  (q&,&p)&\mapsto& q + p \qquad
\end{array}
\]
is surjective, that is, if $(P+Q)\cap \Z^d =
(P\cap\Z^d)+(Q\cap\Z^d)$.
\end{defi}

If the pairs $(P,nP)$ have the integer decomposition property for all
$n \in \N$, then $P$ has it, too.\\
The pair $(\Delta_2,\Delta_2)$ from the example above has the integer
decomposition property, so we see that pairs of polytopes with the IDP
are not always convex-normal. But the converse implication is true:

\begin{lemma}
  Let $P$ be a rational polytope and let $Q$ be a lattice polytope
  such that $(Q,P)$ is convex-normal. Then $(Q,P)$ has the integer
  decomposition property.
\end{lemma}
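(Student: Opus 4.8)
The plan is to reduce the statement to the single observation that, for a \emph{lattice} polytope $Q$, the set $G(Q)$ is nothing but $Q\cap\Z^d$. Indeed, every vertex $v\in\ver(Q)$ is then an integer point, so $(v+\Z^d)\cap Q=\Z^d\cap Q$ for each $v$, and taking the union over all vertices collapses to $G(Q)=Q\cap\Z^d$. This is precisely where lattice-ness of $Q$ (as opposed to mere rationality) enters the argument; note that $P$ is allowed to be rational throughout.

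With this in hand, I would argue as follows. Fix an arbitrary $z\in(Q+P)\cap\Z^d$; the goal is to exhibit $q\in Q\cap\Z^d$ and $p\in P\cap\Z^d$ with $z=q+p$. By the convex-normality hypothesis we have $Q+P=G(Q)+P$, so we may write $z=g+p$ with $g\in G(Q)$ and $p\in P$. By the observation above, $g\in Q\cap\Z^d$; in particular $g\in\Z^d$.

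It then remains only to check that $p$ is integral, which is immediate: since $p=z-g$ is the difference of the two integer vectors $z$ and $g$, we get $p\in\Z^d$, and $p\in P$ by construction, so $p\in P\cap\Z^d$. Setting $q:=g$ yields the desired decomposition $z=q+p$, which proves surjectivity of the addition map and hence the IDP for the pair $(Q,P)$.

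The point worth emphasizing is that there is essentially no obstacle once convex-normality is assumed: all of the geometric difficulty has been pushed into the hypothesis $Q+P=G(Q)+P$, and the lemma is really just an unwinding of definitions. The only place the argument can go wrong is if $Q$ fails to be a lattice polytope, since then $G(Q)$ may contain non-integral points and the decomposition $z=g+p$ would no longer force $g$ (and hence $p$) to be integral. This is exactly why the lemma requires $Q$ to be a lattice polytope while permitting $P$ to be merely rational.
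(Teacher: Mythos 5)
Your proof is correct and follows essentially the same route as the paper: both rest on the observation that $G(Q)=Q\cap\Z^d$ for a lattice polytope $Q$, apply convex-normality to write an integer point of $Q+P$ as $g+p$ with $g\in G(Q)$, and conclude that $p=z-g$ is integral. The only difference is cosmetic --- you argue element-wise while the paper writes the same argument as a chain of set equalities.
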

\begin{proof}
As $(Q,P)$ is convex-normal, we know that $Q+P = G(Q) + P$.\\
As $Q$ is a lattice polytope, we have  $G(Q)= Q\cap \Z^d$ and hence
\[
(Q+P)\cap \Z^d = (G(Q)+P)\cap \Z^d = ((Q\cap \Z^d)+P)\cap \Z^d =
(Q\cap \Z^d) + (P\cap \Z^d)
\]
\end{proof}

In the remainder of this paper we will prove a sufficient condition,
based on edge lengths, for a pair $(Q,P)$ to be convex-normal.\\
Given a polytope $P$, if $F$ is a face of $P$ we write $F\prec P$. For
every nonempty face $F$ of $P$ there exists a linear functional $c_F$,
such that $c_F^tx$ is maximal over $P$ iff $x\in F$. We also say that
$c_F$ defines the face $F$.
The set 
\[
C_F = \left\{c\, : \;\; \{z : \max_{x\in P}c^tx = c^tz \} \supseteq F
\right\}
\] 
is a polyhedral cone.\\
The \emph{normal fan} $\cN(P)$ of $P$ is the collection of these cones
over all nonempty faces of $P$. The correspondence $F
\longleftrightarrow C_F$ is an inclusion reversing bijection. I.e.,
given two faces $F,F'\prec P$, then $F\subseteq F'$ if and only if
$C_{F'}\subseteq C_F$.\\
In the above examples $P$ and $Q$ had the same normal fan. If we drop
this condition, there are pairs of polytopes with arbitrarily long
edges lacking the integer decomposition property and not being
convex-normal.
\begin{example}
Set
\[
Q=\conv \begin{pmatrix}
0 & 1 & 0 \\
0 & k & 1
\end{pmatrix}\quad \text{ and } \quad P=\conv\begin{pmatrix}
0 & -l & -(l-1) \\
0 & 1 & 1 
\end{pmatrix}
\]
\begin{figure}[h]
\begin{center}
\begin{tikzpicture}[scale=1]
    \draw[step=1cm,gray,very thin] (-3.9,-0.9) grid (2.9,2.9);
    \draw[->] (-4,0) -- (3,0);
    \draw[->] (0,-1) -- (0,3);
                \begin{pgftransparencygroup}
					\pgfsetfillopacity{0.6}
					\filldraw[fill=red, draw=blue!50!black] (0,0) -- (1,2) --  (0,1) -- cycle;
				\end{pgftransparencygroup}
				\begin{pgftransparencygroup}
					\pgfsetfillopacity{0.6}
					\filldraw[fill=red, draw=blue!50!black] (0,0) -- (-2,1) --  (-3,1) -- cycle;
				\end{pgftransparencygroup}
\end{tikzpicture}

\begin{tikzpicture}[scale=1]
    \draw[step=1cm,gray,very thin] (-3.9,-0.9) grid (2.9,2.9);
    \draw[->] (-4,0) -- (3,0);
    \draw[->] (0,-1) -- (0,3);
                \begin{pgftransparencygroup}
					\pgfsetfillopacity{0.6}
					\filldraw[fill=gray, draw=gray!50!black] (0,0) -- (-3,1) --  (-3,2) -- (-2,3) -- (-1,3) -- (1,2) -- cycle;
				\end{pgftransparencygroup}
				\filldraw	(0,0) circle (3pt)
							(0,1) circle (3pt)
							(1,2) circle (3pt);
				\begin{pgftransparencygroup}
					\pgfsetfillopacity{0.6}
					\filldraw[fill=red, draw=blue!50!black] (0,0) -- (-3,1) --  (-2,1) -- cycle;
				\end{pgftransparencygroup}
				\begin{pgftransparencygroup}
					\pgfsetfillopacity{0.6}
					\filldraw[fill=red, draw=blue!50!black] (0,1) -- (-3,2) --  (-2,2) -- cycle;
				\end{pgftransparencygroup}
				\begin{pgftransparencygroup}
					\pgfsetfillopacity{0.6}
					\filldraw[fill=red, draw=blue!50!black] (1,2) -- (-2,3) --  (-1,3) -- cycle;
				\end{pgftransparencygroup}
				\draw[color=blue]	(-1,1) circle (3pt)
									(-1,2) circle (3pt)
									(0,2)  circle (3pt);
\end{tikzpicture}
\end{center}
\caption*{$Q+P$ and $G(Q)+P$ for $n=1$, $k=2$ and $l=3$.}
\end{figure}
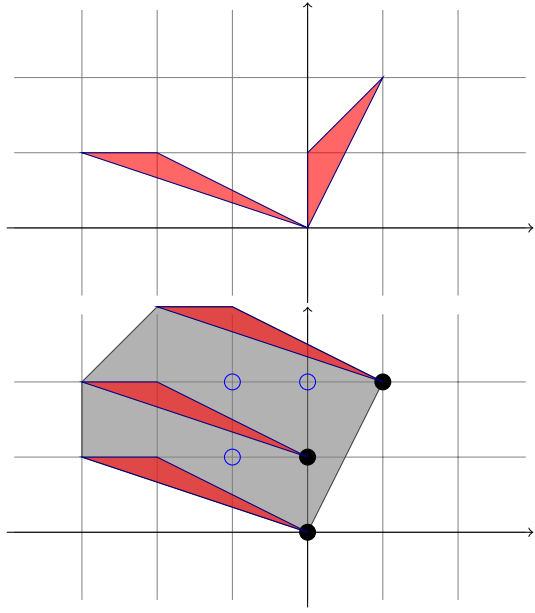

If we look at $(nQ,nP)$, then both polytopes have edge length $n$ and
there are $O(n^4)$ lattice points in $(nP\cap \Z^2)+(nQ\cap \Z^2)$,
but $k\cdot l \cdot O(n^2)$ lattice points in $nP + nQ$. Hence for
$k,l \gg n$, the pair $(nQ,nP)$ neither has the integer decomposition
property nor is it convex-normal.
\end{example}
For a pair $(Q,P)$ of polytopes to be convex-normal, it is not
enough if both polytopes have the integer decomposition property, be
$k$-convex-normal or have long edges and the examples suggest that we
need a condition on the normal fans of $P$ and $Q$ and in fact that is
what we need.\\
Given two $d$-polytopes $Q$ and $P$, if $\cN(P)$ is a refinement of
$\cN(Q)$ (as in finer subdivision of $\R^d$), then for every cone
$C\in \cN(P)$ there exists a cone $D\in \cN(Q)$ s.t. $C\subseteq D$.
In this case we can define a map $\Phi' : \cN(P) \to \cN(Q)$
s.t. $\Phi'(C)$ is defined as the smallest cone in $\cN(Q)$ containing
$C$. This map preserves inclusions and has a corresponding map $\Phi:
\cL(P) \to \cL(Q)$ on the face lattices of $P$ and $Q$, taking a face
$F\prec P$ with corresponding cone $C_F$ to the face $G\prec Q$ with
corresponding cone $C_G=\Phi'(C_F)$.\\
\begin{example}
We illustrate the map in the following picture with 
\[
P = \conv \begin{pmatrix}
0 & 3 & 3 & 2 & -1 & -1 \\
0 & 0 & -2 & -3 & -3 & -1 \\
\end{pmatrix}
\quad \text{and} \quad Q = \conv \begin{pmatrix}
0 & 2 & 2 & 0 \\
0 & 0 & -2 & -2 \\
\end{pmatrix}
\]
\begin{center}
\begin{tikzpicture}[scale=0.8]
    \draw[step=1cm,gray,very thin] (-1.9,-3.9) grid (3.9,0.9);
    \draw[->] (-2,0) -- (4,0);
    \draw[->] (0,1) -- (0,-4);
    \begin{pgftransparencygroup}
		\pgfsetfillopacity{0.6}
		\filldraw[fill=gray] (0,0) -- (-1,-1) -- (-1,-3) -- (2,-3) -- (3,-2) -- (3,0) -- cycle;
	\end{pgftransparencygroup}
	\filldraw[fill=red] (0,0) circle (3pt);
	\filldraw[fill=yellow] (3,0) circle (3pt);
	\filldraw[fill=green] (3,-2) circle (3pt);
	\filldraw[fill=green!50!black] (2,-3) circle (3pt);
	\filldraw[fill=blue] (-1,-3) circle (3pt)
		node[below=1cm, right=0.7cm]{P \& \cN(P)};
	\filldraw[fill=red!50!black] (-1,-1) circle (3pt)
		node[above=0.5cm, right=0.1cm]{e};
\end{tikzpicture}
\qquad
\begin{tikzpicture}[scale=0.8]
    \draw[step=1cm,gray,very thin] (-0.9,-2.9) grid (2.9,0.9);
    \draw[->] (-1,0) -- (3,0);
    \draw[->] (0,1) -- (0,-3);
                \begin{pgftransparencygroup}
					\pgfsetfillopacity{0.6}
					\filldraw[fill=gray] (0,0) -- (0,-2) --  (2,-2) -- (2,0) -- cycle;
				\end{pgftransparencygroup}
	\filldraw[fill=red] (0,0) circle (3pt);
	\filldraw[fill=yellow] (2,0) circle (3pt);
	\filldraw[fill=green] (2,-2) circle (3pt);
	\filldraw[fill=blue] (0,-2) circle (3pt)
		node[below=1cm , right=0cm]{Q \& \cN(Q)};

\end{tikzpicture}
\end{center}

\begin{center}
\begin{tikzpicture}[scale=0.8]
	\begin{pgftransparencygroup}
		\pgfsetfillopacity{0.6}
		\fill[fill=red!50!black] (0,0) -- (-2,0) -- (-2,2) -- cycle;
		\fill[fill=red] (0,0) -- (0,2) -- (-2,2) -- cycle;
	    \fill[fill=yellow] (0,2) -- (2,0) -- (0,0) -- cycle;
	    \fill[fill=green] (0,0) -- (2,0) -- (2,-2) -- cycle;
	    \fill[fill=green!50!black] (0,0) -- (0,-2) -- (2,-2) -- cycle;
	    \fill[fill=blue] (0,0) -- (0,-2) -- (-2,0) -- cycle;
	\end{pgftransparencygroup}
    \draw[->,ultra thick] (0,0) -- (0,2.05);
    \draw[->,ultra thick] (0,0) -- (2.05,0);
    \draw[->,ultra thick] (0,0) -- (2.05,-2.05);
    \draw[->,ultra thick] (0,0) -- (0,-2.05);
    \draw[->,ultra thick] (0,0) -- (-2.05,0);
    \draw[->,ultra thick] (0,0) -- (-2.05,2.05);
\end{tikzpicture}
\qquad \qquad
\begin{tikzpicture}[scale=0.8]
	\begin{pgftransparencygroup}
		\pgfsetfillopacity{0.6}
	    \fill[fill=red] (0,0) -- (0,2) -- (-2,0)-- cycle;
	    \fill[fill=yellow] (0,0) -- (0,2) -- (2,0) -- cycle;
	    \fill[fill=green] (0,0) -- (2,0) -- (0,-2) -- cycle;
	    \fill[fill=blue] (0,0) -- (0,-2) -- (-2,0) -- cycle;
	\end{pgftransparencygroup}
    \draw[->,ultra thick] (0,0) -- (0,2);
    \draw[->,ultra thick] (0,0) -- (2,0);
    \draw[->,ultra thick] (0,0) -- (0,-2);
    \draw[->,ultra thick] (0,0) -- (-2,0);
\end{tikzpicture}
\end{center}
For example the edge $e$ from $(-1,-1)$ to $(0,0)$ in $P$ corresponds
to the vertex $(0,0)$ in $Q$, i.e. $\Phi(e)=(0,0)$ because $e$
corresponds to $\cone\begin{pmatrix}
  -1\\
  1
\end{pmatrix} \in \cN(P)$ and the smallest cone of $\cN(Q)$ containing
it is $\cone \begin{pmatrix}
  -1 & 0 \\
  0  & 1
\end{pmatrix}$, which is the normal cone belonging to $(0,0)$ in $Q$.
\end{example}

\section{A sufficient criterion for convex-normality of $(Q,P)$}
Now that we got all the tools lined up, we can start the proof with
the following lemma, which is the base case for our induction:
\begin{lemma}\label{basecase}
  Let $P=[0,q]$ and $Q=[0,m]$ be intervals with $q\geq \min \{1 , m
  \}$, then $(Q,P)$ is convex-normal.
\end{lemma}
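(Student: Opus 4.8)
The plan is to establish the nontrivial inclusion $Q + P \subseteq G(Q) + P$, since the reverse inclusion holds automatically from $G(Q) \subseteq Q$. Because both polytopes are segments on a line, $Q + P = [0, m+q]$ and $G(Q) + P = \bigcup_{g \in G(Q)} [g, g+q]$ is a union of translated copies of $P$. Thus the entire statement reduces to the combinatorial fact that these copies cover the single interval $[0, m+q]$.

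First I would compute $G(Q)$ explicitly. Basing the lattice $\Z$ at the vertex $0$ yields the points $0, 1, \ldots, \lfloor m\rfloor$, while basing it at the vertex $m$ yields $m, m-1, \ldots, m - \lfloor m\rfloor$. For the covering argument only two features of $G(Q)$ are relevant: it contains every integer $0, 1, \ldots, \lfloor m\rfloor$, and it contains the right endpoint $m$ itself.

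I would then split into the two cases singled out by $\min\{1,m\}$. If $m < 1$, then $\lfloor m\rfloor = 0$, so $G(Q) = \{0, m\}$ and the hypothesis reads $q \geq m$; the copies $[0,q]$ and $[m, m+q]$ overlap because $q \geq m$, so their union is exactly $[0, m+q]$. If $m \geq 1$, then $q \geq 1$, and the copies based at the consecutive integers $0, 1, \ldots, \lfloor m\rfloor$ overlap pairwise, since each $[j, j+q]$ reaches past $j+1$; together they cover $[0, \lfloor m\rfloor + q]$. The copy based at the right vertex $m$ then extends the cover, because $m \leq \lfloor m\rfloor + 1 \leq \lfloor m\rfloor + q$ places its left endpoint inside the already-covered region while its right endpoint $m+q$ completes $[0, m+q]$.

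The one subtle point, and precisely what the hypothesis $q \geq \min\{1,m\}$ is tailored to, is the fractional gap between $\lfloor m\rfloor$ and $m$. This is exactly why the definition of $G(Q)$ bases the lattice at \emph{both} vertices of $Q$ rather than only at $0$: the copy of $P$ sitting at $m$ is what bridges the stretch $[\lfloor m\rfloor + q,\, m+q]$ that the integer-based copies leave uncovered. I expect no genuine difficulty beyond tracking this rightmost endpoint carefully; the argument is a bookkeeping of interval overlaps rather than any hard estimate.
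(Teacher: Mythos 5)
Your proof is correct and follows essentially the same route as the paper's: the same case split on whether $m<1$ or $m\geq 1$ (equivalently $\lfloor m\rfloor \geq 1$), covering $[0,m+q]$ by the copies of $P$ based at the integers $0,1,\ldots,\lfloor m\rfloor$ together with the copy based at the vertex $m$. You simply spell out the overlap bookkeeping that the paper's terse two-line argument leaves implicit.
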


\begin{proof}
Set $l:=\lfloor m \rfloor$. If $l\geq 1$, then 
\[
Q+P = [0,q+m] =\left(\bigcup_{i=0}^l i + [0,q]\right) \cup m + [0,q]
\subseteq G(Q) + P
\]
as $q \geq 1$.\\
If $l<1$, then:
\[
Q+P = ( 0 + P ) \cup (m + P)
\]
as $q\geq l$.
\end{proof}

Now we can prove the main result.
\begin{theo}\label{mainB}
  Let $P$ and $Q$ be rational $d$-polytopes such that $\cN(P)$ is a
  refinement of $\cN(Q)$ and such that $\ell(e_P)\geq d\cdot
  \ell(e_Q)$ for all edges $e_P\prec P$ and $e_Q\prec Q$, where
  $e_Q=\Phi(e_P)$.
  Then $(Q,P)$ is convex-normal.
\end{theo}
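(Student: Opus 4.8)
The plan is to prove the only nontrivial inclusion $Q+P\subseteq G(Q)+P$ by induction on the dimension $d$, with Lemma~\ref{basecase} serving as the base case $d=1$. Throughout I would exploit the fact that, since $\cN(P)$ refines $\cN(Q)$, the coarsest common refinement is $\cN(P)$ itself, so $\cN(Q+P)=\cN(P)$ and every nonempty face of $Q+P$ has the form $\Phi(F)+F$ for a unique face $F\prec P$, where $\Phi(F)\prec Q$ is the corresponding face. To be able to feed faces back into the induction, I would state the inductive claim for pairs living in an affine lattice subspace (replacing $\Z^d$ by the induced sublattice), so that ``dimension'' refers to $\dim\aff(Q+P)$ and the required edge ratio is the intrinsic dimension.

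The boundary is the easy half. Suppose $z\in Q+P$ lies in a proper face $\Phi(F)+F$ of $Q+P$. The pair $(\Phi(F),F)$ inherits both hypotheses: restricting $\cN(P)$ and $\cN(Q)$ to the linear span of the face preserves the refinement, and every edge $e$ of $F$ is an edge of $P$ with $\Phi(e)$ an edge (or vertex) of $\Phi(F)$, so the ratio is still $\geq d$, which exceeds the intrinsic dimension $\leq d-1$ of the sub-pair. Hence the induction hypothesis gives that $(\Phi(F),F)$ is convex-normal, i.e.\ $z\in G(\Phi(F))+F$. Since $\ver(\Phi(F))\subseteq\ver(Q)$ we have $G(\Phi(F))\subseteq G(Q)$, and $F\subseteq P$; therefore $z\in G(Q)+P$. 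This shows $\partial(Q+P)\subseteq G(Q)+P$.

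For the interior, let $z\in\mathrm{int}(Q+P)$. I would choose a primitive lattice vector $\xi$ pointing along an edge of $P$ and let $\pi\colon\R^d\to\R^d/\R\xi$ be the quotient projection; as $\xi$ is primitive, $\pi(\Z^d)$ is a rank-$(d-1)$ lattice. Projection commutes with Minkowski sums, $\pi(Q+P)=\pi(Q)+\pi(P)$, and corresponds to intersecting normal fans with $\xi^{\perp}$, so $\cN(\pi P)$ still refines $\cN(\pi Q)$. Granting that the edge hypothesis survives with the ratio degrading from $d$ to $d-1$, the induction hypothesis applies to $(\pi Q,\pi P)$ and yields $\pi(z)=\bar g+\bar p$ with $\bar g\in G(\pi Q)$ and $\bar p\in\pi P$. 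It remains to lift through the fiber over $\pi(z)$: the preimage in $Q$ of the vertex of $\pi Q$ underlying $\bar g$ is a segment (or point) in direction $\xi$, and along this one-dimensional fiber the problem is exactly the interval situation of Lemma~\ref{basecase}, with $q$ the available $\xi$-length inside $P$ and $m$ the $\xi$-length of the fiber of $Q$; this produces the desired $g\in G(Q)$ and $p\in P$ with $z=g+p$.

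The crux, and the step I expect to cost the most work, is the edge-length bookkeeping under $\pi$. Projection can shorten edges and create new silhouette edges of $\pi P$ and $\pi Q$ that are not projections of single edges, and one must show that for every edge of $\pi P$ the edge of $\pi Q$ associated by the induced map $\bar\Phi$ is shorter by a factor of at least $d-1$; understanding precisely how the factor degrades from $d$ to $d-1$, and why taking $\xi$ to be an edge direction of $P$ is what makes this work, is the heart of the argument. A secondary technical point is checking that the interval hypothesis $q\ge\min\{1,m\}$ of Lemma~\ref{basecase} holds on every fiber — which is presumably why that lemma is proved under the weaker hypothesis $q\ge\min\{1,m\}$ rather than $q\ge m$ — and that $\bar\Phi$ is compatible with $\Phi$, so that the lifted base point genuinely lands in $G(Q)$.
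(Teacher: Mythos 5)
Your boundary step is fine as far as it goes: since $\cN(Q+P)=\cN(P)$, every proper face of $Q+P$ is $\Phi(F)+F$, and the pair $(\Phi(F),F)$ inherits the hypotheses, so induction covers $\partial(Q+P)$. But this covers only a measure-zero set; the whole difficulty of the theorem is the interior, and there your argument has a genuine gap — and it is not the edge-length bookkeeping you flag, but the lifting step. If the induction downstairs hands you $\pi(z)=\bar g+\bar p$ with $\bar g\in G(\pi Q)$, $\bar p\in\pi P$, then any lift $z=g+p$ with $\pi(g)=\bar g$, $g\in Q$, $p\in P$ forces $z\in\bigl(Q\cap\pi^{-1}(\bar g)\bigr)+\bigl(P\cap\pi^{-1}(\bar p)\bigr)$. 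However, the fiber of $Q+P$ over $\pi(z)$ is the union of such fiber sums over \emph{all} decompositions of $\pi(z)$, and it can strictly contain the one you were handed; the inductive statement ``$(\pi Q,\pi P)$ is convex-normal'' gives you no control over which decomposition you get. Concretely: take $d=2$, $Q=\conv\{(0,0),(1,0),(0,1)\}$, $P=\conv\{(0,0),(4,0),(2,2),(0,3)\}$ (the hypotheses hold with ratio $2$; the edge of $P$ with outer normal $(1,2)$ maps to the vertex $(0,1)$ of $Q$), $\xi=(0,1)$ (the direction of the left edge of $P$), and $z=(3/2,3)\in\mathrm{int}(Q+P)$. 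Downstairs $\pi Q=[0,1]$, $\pi P=[0,4]$, $G(\pi Q)=\{0,1\}$, and $(\bar g,\bar p)=(1,1/2)$ is a perfectly valid witness of convex-normality of the pair $(\pi Q,\pi P)$. But the fiber of $Q$ over $\bar g=1$ is the single point $(1,0)$, the fiber of $P$ over $\bar p=1/2$ is $\{1/2\}\times[0,11/4]$, and their sum $\{3/2\}\times[0,11/4]$ misses $z$; indeed the only candidate $g=(1,0)$ gives $z-g=(1/2,3)\notin P$. The other decomposition $(0,3/2)$ does lift (via $g=(0,1)$), but your proof has no mechanism to select it, and arranging for the downstairs induction to produce fiber-compatible decompositions is essentially the original problem over again. (Your worry about Lemma~\ref{basecase} on fibers is secondary: even when that lemma applies to the two fiber intervals, $z$ need not lie in their sum at all.)

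For contrast, the paper never projects. It splits $Q+P$ into a full-dimensional collar $B=\overline{(Q+P)- I}$ and an inner part $I=\frac{d-1}{d}P+Q$. Points of $B$ are covered by combining a regular subdivision of $Q+P$ into pieces $F_Q+\conv(0,F_P)$ with the induction hypothesis applied to the \emph{scaled} facet pairs $\bigl(F_Q,\frac{d-1}{d}F_P\bigr)$ — the ratio degrades from $d$ to $d-1$ by scaling, not by projecting, and the leftover $\bigl(\mu-\frac{d-1}{d}\bigr)p$ is absorbed because $\conv(0,F_P)\subseteq P$. The inner part needs no decomposition at all: using monotone edge paths (the simplex method) and the fact that corresponding edges are parallel with ratio $\geq d$, the paper shows $Q\subseteq\frac{1}{d}P$, whence $I\subseteq\frac{d-1}{d}P+\frac{1}{d}P=P=0+P\subseteq G(Q)+P$. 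This is exactly the ingredient your proposal lacks: some step where the factor $d$ does real work on the interior. Ironically, the bookkeeping you call the crux is the lesser problem — corresponding edges of $P$ and $Q$ are parallel, so projection multiplies both lattice lengths by the same integer and the ratio is not degraded — whereas the lift is where the approach breaks.
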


\begin{proof}
Lemma \ref{basecase} took care of the base case, hence let $P$ and $Q$
be $d$-polytopes with $d\geq 2$.
\text{ }\\

\noindent
\textbf{STEP 1 - SUBDIVIDING $Q+P$:}\\
We start by subdividing $Q + P$ by assigning weights/heights to the
vertices of $P$ and $Q$, where without loss of generality $\0\in
\ver(P)$ and $\0=\Phi(0)\in \ver (Q)$. Vertices of $Q$ and the vertex
$\0$ of $P$ get height $0$ and all the other vertices of $P$ get
height $1$. We use those heights to define new polytopes $P'$ and $Q'$
in $\R^{d+1}$ as follows.
\[
Q':=\conv \{(w,0) : w \in \ver(Q)\} \text{ and } P':=\conv\left(
  (\0,0)\cup\left\{(u,1) :u\in \ver(P)\backslash\{v\}\right\}\right)
\]
Then the projection of $P'+Q'$ onto the first $d$ coordinates is $P+Q$
and the lower hull of $P'+Q'$ induces a subdivision of $P+Q$ into the
following pieces.\\
\[
0 + Q \text{ and } F_Q+(\conv(0,F_P))
\]
$\text{ for faces } F_Q\prec Q \text{ and faces } F_P \prec P, \text{
  with } 0\not\in F_P \text{ and } \phi(F_P)=F_Q.$

\begin{center}
\begin{tikzpicture}[scale=0.6]
    \draw[step=1cm,gray,very thin] (-2.9,-6.9) grid (6.9,0.9);
    \draw[->] (-3,0) -- (7,0);
    \draw[->] (0,1) -- (0,-7);
                \begin{pgftransparencygroup}
					\pgfsetfillopacity{0.6}
					\filldraw[fill=green, draw=green!50!black] (0,0) -- (-2,-2) --  (-2,-6) -- (0,0) -- cycle;
				\end{pgftransparencygroup}
                \begin{pgftransparencygroup}
					\pgfsetfillopacity{0.6}
					\filldraw[fill=blue, draw=blue!50!black] (-2,-6) -- (4,-6) --  (0,0) -- cycle;
				\end{pgftransparencygroup}
                \begin{pgftransparencygroup}
					\pgfsetfillopacity{0.6}
					\filldraw[fill=yellow, draw=red!50!black] (4,-6) -- (6,-4) -- (0,0) -- cycle;
				\end{pgftransparencygroup}										
                \begin{pgftransparencygroup}
					\pgfsetfillopacity{0.6}
					\filldraw[fill=red, draw=blue!50!black] (0,0) -- (6,-4) -- (6,0) -- cycle;
				\end{pgftransparencygroup}
	\draw (0,0) -- (-2,-6);
	\draw (0,0) -- (4,-6);
	\draw (0,0) -- (6,-4);
	\filldraw (0,0) circle (3pt);
	\draw[fill=blue] (2,-6) circle (0pt)
	node[below=0.7cm]{$P$};
\end{tikzpicture}
\begin{tikzpicture}[scale=0.6]
    \draw[step=1cm,gray,very thin] (-0.9,-2.9) grid (2.9,0.9);
    \draw[->] (-1,0) -- (3,0);
    \draw[->] (0,1) -- (0,-3);
                \begin{pgftransparencygroup}
					\pgfsetfillopacity{0.6}
					\filldraw[fill=gray] (0,0) -- (0,-2) --  (2,-2) -- (2,0) -- cycle;
				\end{pgftransparencygroup}
	\filldraw (0,0) circle (3pt);
	\draw[fill=blue] (1,-2) circle (0pt)
	node[below=0.7cm]{$Q$};
\end{tikzpicture}
\end{center}

\begin{center}
\begin{tikzpicture}[scale=0.6]
    \draw[step=1cm,gray,very thin] (-2.9,-8.9) grid (8.9,0.9);
    \draw[->] (-3,0) -- (9,0);
    \draw[->] (0,1) -- (0,-9);
                \begin{pgftransparencygroup}
					\pgfsetfillopacity{0.6}
					\filldraw[fill=gray, draw=gray!50!black] (0,0) -- (-2,-2) --  (-2,-6) -- (4,-6) -- (6,-4) -- (6,0) -- cycle;
				\end{pgftransparencygroup}									
                \begin{pgftransparencygroup}
					\pgfsetfillopacity{0.6}
					\filldraw[fill=gray, draw=gray!50!black] (0,0) -- (0,-2) --  (2,-2) -- (2,0) -- cycle;
				\end{pgftransparencygroup}
                \begin{pgftransparencygroup}
					\pgfsetfillopacity{0.6}
					\filldraw[fill=green, draw=green!50!black] (0,0) -- (-2,-2) --  (-2,-8) -- (0,-2) -- cycle;
				\end{pgftransparencygroup}
                \begin{pgftransparencygroup}
					\pgfsetfillopacity{0.6}
					\filldraw[fill=blue, draw=blue!50!black] (-2,-8) -- (6,-8) -- (2,-2) -- (0,-2) -- cycle;
				\end{pgftransparencygroup}
                \begin{pgftransparencygroup}
					\pgfsetfillopacity{0.6}
					\filldraw[fill=yellow, draw=yellow!50!black] (6,-8) -- (8,-6) -- (2,-2) -- cycle;
				\end{pgftransparencygroup}	
                \begin{pgftransparencygroup}
					\pgfsetfillopacity{0.6}
					\filldraw[fill=red, draw=red!50!black] (8,-6) -- (8,0) -- (2,0) -- (2,-2) -- cycle;
				\end{pgftransparencygroup}
	\draw (0,-2) -- (-2,-8);
	\draw (2,-2) -- (6,-8);
	\draw (2,-2) -- (8,-6);
	\filldraw (0,0) circle (3pt);
	\draw[fill=blue] (3,-8) circle (0pt)
	node[below=0.7cm]{$Q+P$ subdivided into $0 + Q \text{ and } F_Q+(\conv(0,F_P))$};
\end{tikzpicture}
\end{center}

Another decomposition of $P + Q$ we will be using, is the following:
\[
I:= \left(\frac{d-1}{d}\right)P + Q \quad \text{ and } \quad B:=
\overline{\left( P+Q \right) - I}
\]
Where $I$ stands for the ``inner'' part of $P+Q$ and $B$ stands for
the ``boundary'' part of $P+Q$.
\begin{center}
\begin{tikzpicture}[scale=0.6]
    \draw[step=1cm,gray,very thin] (-1.9,-3.9) grid (3.9,0.9);
    \draw[->] (-2,0) -- (4,0);
    \draw[->] (0,1) -- (0,-4);
                \begin{pgftransparencygroup}
					\pgfsetfillopacity{0.6}
					\filldraw[fill=green, draw=green!50!black] (0,0) -- (-1,-1) --  (-1,-3) -- (0,0) -- cycle;
				\end{pgftransparencygroup}
                \begin{pgftransparencygroup}
					\pgfsetfillopacity{0.6}
					\filldraw[fill=blue, draw=blue!50!black] (-1,-3) -- (2,-3) --  (0,0) -- cycle;
				\end{pgftransparencygroup}
                \begin{pgftransparencygroup}
					\pgfsetfillopacity{0.6}
					\filldraw[fill=yellow, draw=yellow!50!black] (2,-3) -- (3,-2) -- (0,0) -- cycle;
				\end{pgftransparencygroup}										
                \begin{pgftransparencygroup}
					\pgfsetfillopacity{0.6}
					\filldraw[fill=red, draw=red!50!black] (0,0) -- (3,-2) -- (3,0) -- cycle;
				\end{pgftransparencygroup}
	\draw (0,0) -- (-1,-3);
	\draw (0,0) -- (2,-3);
	\draw (0,0) -- (3,-2);
	\filldraw (0,0) circle (3pt);
	\draw[fill=blue] (1,-3) circle (0pt)
	node[below=0.7cm]{$\left(\frac{d-1}{d}\right)P$};
\end{tikzpicture}
\begin{tikzpicture}[scale=0.6]
    \draw[step=1cm,gray,very thin] (-0.9,-2.9) grid (2.9,0.9);
    \draw[->] (-1,0) -- (3,0);
    \draw[->] (0,1) -- (0,-3);
                \begin{pgftransparencygroup}
					\pgfsetfillopacity{0.6}
					\filldraw[fill=gray, draw=gray!50!black] (0,0) -- (0,-2) --  (2,-2) -- (2,0) -- cycle;
				\end{pgftransparencygroup}
	\filldraw (0,0) circle (3pt);
	\draw[fill=blue] (1,-2) circle (0pt)
	node[below=0.7cm]{$Q$};
\end{tikzpicture}

\begin{tikzpicture}[scale=0.6]
    \draw[step=1cm,gray,very thin] (-2.9,-8.9) grid (8.9,0.9);
    \draw[->] (-3,0) -- (9,0);
    \draw[->] (0,1) -- (0,-9);
				\begin{pgftransparencygroup}
					\pgfsetfillopacity{0.7}
					\filldraw[fill=brown, draw=brown!50!black] (-1,-1) -- (-2,-2) --  (-2,-8) -- (6,-8) -- (8,-6) -- (8,0) -- (5,0) -- (5,-4) -- (4,-5) -- (-1,-5) -- cycle;
				\end{pgftransparencygroup}
    			\begin{pgftransparencygroup}
					\pgfsetfillopacity{0.7}
					\filldraw[fill=gray, draw=blue!50!black] (0,0) -- (0,-2) --  (2,-2) -- (2,0) -- cycle;
				\end{pgftransparencygroup}							
				\begin{pgftransparencygroup}
					\pgfsetfillopacity{0.7}
					\filldraw[fill=green, draw=green] (0,0) -- (-1,-1) --  (-1,-5) -- (0,-2) -- cycle;
				\end{pgftransparencygroup}	
    			\begin{pgftransparencygroup}
					\pgfsetfillopacity{0.7}
					\filldraw[fill=blue, draw=blue] (0,-2) -- (-1,-5) --  (4,-5) -- (2,-2) -- cycle;
				\end{pgftransparencygroup}											
    			\begin{pgftransparencygroup}
					\pgfsetfillopacity{0.7}
					\filldraw[fill=yellow, draw=yellow] (2,-2) -- (4,-5) -- (5,-4) -- (2,-2) -- cycle;
				\end{pgftransparencygroup}
    			\begin{pgftransparencygroup}
					\pgfsetfillopacity{0.7}
					\filldraw[fill=red, draw=red] (2,-2) -- (5,-4) -- (5,0) -- (2,0) -- cycle;
				\end{pgftransparencygroup}	
	\filldraw (0,0) circle (3pt);
	\draw[fill=blue] (3,-8) circle (0pt)
	node[below=0.7cm]{$Q+P$ divided into $I$ and $B$};
\end{tikzpicture}
\end{center}

In the next step we will be using our first sudivision to cover the
boundary part. We will then show that covering $I$ is easy because it
lies in $0+P$.
\bigskip

\noindent
\textbf{STEP 2.1 - COVERING B:}

Let $x\in B$, then $x\not\in Q$ as $0\in \frac{d-1}{d}P$ and hence we
can find facets $F_P\prec P$ and $F_Q\prec Q$ such that $x \in F_Q +
(\conv(v,F_P))$ coming from our subdivision in STEP $1$. Hence $x$ can
be written as $x = q + \mu p$, with $q \in F_Q \prec Q$, $p\in
F_P\prec P$ and $0\leq \frac{d-1}{d} \leq \mu \leq 1$.
Then $z:= q +\frac{d-1}{d} p \in \frac{d-1}{d} F_P +F_Q$. Furthermore
$(F_Q, \frac{d-1}{d} F_P)$ is convex-normal by induction as
$\cN(F_Q)\prec \cN(\frac{d-1}{d}F_P)$ and given edges $e_{F_Q}\prec
F_Q$ and $\frac{d-1}{d}e_{F_P}\prec \frac{d-1}{d}F_P
\,(\Leftrightarrow e_{F_P}\prec F_P)$ we have
\[
\ell\left(\frac{d-1}{d}e_{F_P}\right)=\left(\frac{d-1}{d}\right)\ell\left(e_{F_P}\right)\geq
\left(\frac{d-1}{d}\right)\cdot d \ell(e_{F_Q}) = (d-1)
\ell(e_{F_Q}).
\]
Hence we can find a point $g \in G(F_Q)$ such that $z\in g +
\frac{d-1}{d} F_P$, and since $p\in (\conv(0,F_P))$ we get $x \in g +
(\conv(0,F_P))\subseteq g+ P$.\\

\noindent
\textbf{STEP 2.2 - COVERING I:}

Now we are left with covering the points in the inner part $I$ of $P +
Q $.
We claim that $I \subseteq P$, which implies $I \subseteq 0 + P
\subseteq G(Q) + P$. First we reformulate the problem by using that $I
= \left(\frac{d-1}{d}\right)P + Q\subseteq P$ is equivalent to $Q
\subseteq \frac{1}{d}P$.\\
To show the latter, suppose $Q \not \subseteq \frac{1}{d}P$, then there
exists a vertex $u$ of $Q$ that does not lie in $\frac{1}{d}P$. Hence
there exists a functional $c$ such that $c^tu=b$ and $c^tx<b$ for all
$x\in \frac{1}{d}P$. When we use the simplex method to maximize $c$
over $\frac{1}{d}P$ starting in $0$, we get a monotone edge path from
$0$ to an optimal  $u'$. As $\cN(Q)\prec \cN(\frac{1}{d}P)$ we have an
inclusion-preserving map $\cL(\frac{1}{d}P)\to \cL(Q)$ between the two
face lattices. Using this map, we get a corresponding edge path in
$Q$, which also ends in an optimal vertex $u''$, as $c\in C_{u'}
\subseteq C_{u''}$. But as edges in $\frac{1}{d}P$ are at least as
long as the corresponding (parallel) edges in $Q$, we have
\[ 
c^tu'\geq c^tu'' = c^tu \quad\lightning
\]
Hence no vertex of $Q$ is lying outside of $\frac{1}{d}P$, so that
$Q\subseteq \frac{1}{d}P$ which finishes our proof.
\end{proof}

\begin{cor}
Let $P$ be a rational polytope and $Q$ be a lattice polytope, with
\[
Q = Q_1 +\ldots + Q_s
\]
where the $Q_i$ are lattice polytopes such that the pairs $(Q_i,P)$
are convex-normal for all $i$. (For example, they could satisfy the
conditions of the previous Theorem.)\\
Then $(Q,P)$ is convex-normal.
\end{cor}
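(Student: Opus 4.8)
The plan is to reduce everything to the two–summand case $s=2$ and then induct on $s$, exploiting the fact that for a \emph{lattice} polytope $R$ one has $G(R)=R\cap\Z^d$ (as recorded just above). First I would note that every partial Minkowski sum $Q_1+\cdots+Q_j$ of lattice polytopes is again a lattice polytope, so it suffices to prove the following claim: if $Q_1,Q_2$ are lattice polytopes and both $(Q_1,P)$ and $(Q_2,P)$ are convex-normal, then $(Q_1+Q_2,P)$ is convex-normal. Granting this, I set $Q'=Q_1+\cdots+Q_{s-1}$; by the induction hypothesis $(Q',P)$ is convex-normal, and applying the claim to the summands $Q'$ and $Q_s$ yields convex-normality of $(Q'+Q_s,P)=(Q,P)$.

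For the claim itself, since $Q_1$, $Q_2$ and $Q_1+Q_2$ are all lattice polytopes, convex-normality of a pair $(R,P)$ here simply means $R+P=(R\cap\Z^d)+P$. The inclusion $\supseteq$ is automatic, so I only chase a point $x\in(Q_1+Q_2)+P$. Viewing $x\in Q_1+(Q_2+P)$ and invoking convex-normality of $(Q_2,P)$, I write $x=q_1+a_2+p$ with $q_1\in Q_1$, $a_2\in Q_2\cap\Z^d$ and $p\in P$. Now $q_1+p\in Q_1+P$, so convex-normality of $(Q_1,P)$ lets me rewrite $q_1+p=a_1+p'$ with $a_1\in Q_1\cap\Z^d$ and $p'\in P$. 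Combining, $x=(a_1+a_2)+p'$. Because $a_1,a_2\in\Z^d$ we have $a_1+a_2\in\Z^d$, and because $a_1\in Q_1$, $a_2\in Q_2$ we have $a_1+a_2\in Q_1+Q_2$; hence $a_1+a_2\in(Q_1+Q_2)\cap\Z^d=G(Q_1+Q_2)$ and $x\in G(Q_1+Q_2)+P$, as required.

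The step one might fear to be the obstacle — that a sum $w_1+w_2$ of vertices $w_i\in\ver(Q_i)$ need not be a vertex of the Minkowski sum $Q_1+Q_2$ — never actually arises, precisely because for lattice polytopes $G$ collapses to $\cap\,\Z^d$ and the set of lattice points is closed under the Minkowski additions used above; no vertex bookkeeping is needed. The only point requiring care is the \emph{order} in which the two summands are peeled off: one must first use $(Q_2,P)$ to produce the lattice point $a_2$, and only then use $(Q_1,P)$ on the leftover $q_1+p$, so that the two resulting lattice points can be combined into a single lattice point of $Q_1+Q_2$.
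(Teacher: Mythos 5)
Your proof is correct and is essentially the paper's own argument: the paper peels off each summand via the set identity $Q_i+P=G(Q_i)+P$ (using commutativity and associativity of Minkowski sums) and then observes that $G(Q_1)+\cdots+G(Q_s)\subseteq G(Q)$ because sums of lattice points of the $Q_i$ are lattice points of the lattice polytope $Q$ --- which is exactly your point-wise chase ending with $a_1+a_2\in(Q_1+Q_2)\cap\Z^d=G(Q_1+Q_2)$. The only cosmetic difference is that you unroll the iteration into an explicit induction on $s$ with a two-summand claim, while the paper writes it as a single chain of set identities; your closing worry about the order of peeling is immaterial, since the argument is symmetric in $Q_1$ and $Q_2$.
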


\begin{proof}
\noindent
As $(Q_i,P)$ are convex-normal we get:
\[
\begin{array}{ccc}
	Q + P 	&	=	&	(Q_1 + \ldots + Q_s) + P\\
			&	=	&	G(Q_1)+\ldots + G(Q_s) + P\\
			&\subseteq&	G(Q_1+\ldots + Q_s) + P\\
			&	=	&	G(Q)+P
\end{array}
\]
where the second equality is true because the Minkowski sum is
commutative and associative and the inclusion is true because the
$Q_i$ are lattice polytopes.
\end{proof}

\bibliographystyle{alpha}

\bibliography{ConvNormal}

\begin{thebibliography}{LTJZ93}

\bibitem[BG99]{BG99}
Winfried Bruns and Joseph Gubeladze.
\newblock Normality and covering properties of affine semigroups.
\newblock {\em J. Reine Angew. Math.}, 510:161--178, 1999.

\bibitem[BGT97]{BGT97}
Winfried Bruns, Joseph Gubeladze, and Ngo~Viet Trung.
\newblock Normal polytopes, triangulations, and koszul algebras.
\newblock {\em J. Reine Angew. Math.}, 485:123--160, 1997.

\bibitem[EW91]{EW91}
Günter Ewald and Uwe Wessels.
\newblock On the ampleness of invertible sheaves in complete projective toric
  varieties.
\newblock {\em Results Math.}, 19(3-4):275--278, 1991.

\bibitem[Gub12]{Gub12}
Joseph Gubeladze.
\newblock Convex normality of rational polytopes with long edges.
\newblock {\em Advances in Mathematics}, 230(1):372389, 2012.

\bibitem[HHM07]{HHM07}
Christian Haase, Takayuki Hibi, and Diane Maclagan.
\newblock Mini-workshop: Projective normality of smooth toric varieties,
  abstracts from the mini-workshop: held august 12-18, 2007.
\newblock {\em Oberwolfach Reports}, 4(3):22832319, 2007.

\bibitem[KS03]{KS03}
J.-M. Kantor and K.~S. Sarkaria.
\newblock On primitive subdivisions of an elementary tetrahedron.
\newblock {\em Pacific Journal of Mathematics}, 211:123--155, 2003.

\bibitem[LTJZ93]{LTZ93}
Ji~Yong Liu, Leslie~E. Trotter~Jr., and Günter~M. Ziegler.
\newblock On the height of the minimal hilbert basis.
\newblock {\em Results Math.}, 23(3-4):374376, 1993.

\end{thebibliography}

\end{document}